\documentclass{amsart}

\usepackage{graphicx}      
\usepackage{natbib}        

\usepackage{float} 
\usepackage{amssymb} 
\usepackage{curves} 
\usepackage{xcolor} 
\usepackage{eepic} 
\usepackage{enumitem} 
\usepackage{fancyvrb} 
\usepackage{breqn} 
\usepackage{csquotes} 
\usepackage{tikz} 
\usetikzlibrary{patterns} 
\usepackage{tabstackengine} 
\usepackage{tabularx} 
\stackMath 
\usepackage{algorithm,algorithmic}
\usepackage{amsaddr}

\RequirePackage{color}
\RequirePackage[colorlinks, urlcolor=my-blue,linkcolor=my-red,citecolor=my-green]{hyperref}
\definecolor{my-blue}{rgb}{0.0,0.0,0.6}
\definecolor{my-red}{rgb}{0.5,0.0,0.0}
\definecolor{my-green}{rgb}{0.0,0.5,0.0}
\definecolor{nicos-red}{rgb}{0.75,0.0,0.0}
\definecolor{light-gray}{gray}{0.6}
\definecolor{really-light-gray}{gray}{0.8}
\definecolor{sussexg}{rgb}{0.0,0.5,0.5}
\definecolor{sussexp}{rgb}{0.5,0.0,0.5}

\definecolor{darkgreen}{rgb}{0.0,0.5,0.0}
\definecolor{darkblue}{rgb}{0.0,0.0,0.3}
\definecolor{nicosred}{rgb}{0.65,0.1,0.1}
\definecolor{light-gray}{gray}{0.7}
\allowdisplaybreaks[1]

\DeclareMathOperator{\diag}{diag}

\newcommand{\cP}{{\mathcal{P}}}

\newcommand{\cK}{{\mathcal{K}}}

\newcommand{\bR}{{\mathbb{R}}}
\newcommand{\bZ}{{\mathbb{Z}}}
\newcommand{\bN}{{\mathbb{N}}}
\newcommand{\OP}{\Omega^{P}}
\newcommand{\OPtt}{\Omega^{P^{\theta}}}
\newcommand{\xltt}{\mathbf{x}^{\ell,\theta}}

\newcommand{\Ntt}{N^{\theta}}

\newcommand{\Nee}{N^{\eta}}
\newcommand{\biltt}{b_{i}^{\ell,\theta}}

\newcommand{\Rltt}{R^{\ell,\theta}}
\newcommand{\lliltt}{\lambda_{i}^{\ell,\theta}}

\newcommand{\llssltt}{\lambda_{\sigma}^{\ell,\theta}}
\newcommand{\aassiltt}{\alpha_{\sigma,i}^{\ell,\theta}}
\newcommand{\bbiltt}{\beta_{i}^{\ell,\theta}}

\newcommand{\rrltt}{\rho^{\ell,\theta}}
\newcommand{\Ptt}{P^{\theta}}
\newcommand{\cPtt}{{\mathcal{P}}^{\theta}}
\newcommand{\pitt}{p_{i}^{\theta}}
\newcommand{\riltt}{r_{i}^{\ell,\theta}}
\newcommand{\Pltt}{P^{\ell,\theta}}
\newcommand{\piltt}{p_{i}^{\ell,\theta}}
\newcommand{\aijtteelm}{a_{i,j}^{\theta,\eta;\ell,m}}
\newcommand{\xeett}{\mathbf{x}^{m,\eta}}
\newcommand{\pjmee}{p_{j}^{m,\eta}}
\newcommand{\lljmee}{\lambda_{j}^{m,\eta}}
\newcommand{\rrmee}{\rho^{m,\eta}}

\newtheorem{theorem}{Theorem}[section]
\newtheorem{lemma}[theorem]{Lemma}

{\theoremstyle{definition}
\newtheorem{defn}[theorem]{Definition}
\newtheorem{example}[theorem]{Example}

}

\let\emptyset\varnothing
 
\begin{document}

\title[Lyapunov functions for switched systems]{Construction of Lyapunov Functions for Switched Systems using Meshfree Collocation} 

\newcommand\blfootnote[1]{%
  \begingroup
  \renewcommand\thefootnote{}\footnote{#1}%
  \addtocounter{footnote}{-1}%
  \endgroup
}

\keywords{Lyapunov Methods, Stability, Dynamical Systems Techniques, Switched Systems, Meshfree Collocation, Quadratic Programming.}
\subjclass[2020]{Primary: 93D30, 93-08; Secondary: 34D20, 90C20.} 
\blfootnote{This research was supported by EPSRC PhD studentship funding.}

\author{Jay Ward$^{1}$, Nicos Georgiou$^1$ and Peter Giesl$^1$}
\address{$^1$University of Sussex, United Kingdom}
\email{Jay.Ward@sussex.ac.uk, N.Georgiou@sussex.ac.uk and P.A.Giesl@sussex.ac.uk}

\begin{abstract}                
Switched systems are a family of dynamical systems where a switching rule indicates which system is \enquote{switched on}. This rule can be dependent on time and/or position in the state space. Stability of switched systems is a property that is often investigated using the existence of  one (or multiple) Lyapunov function(s). 
We develop an algorithm using a scattered approximation method to construct a Lyapunov function for switched systems, accompanied with stability results. The construction adapts a previous method for autonomous ODEs that uses meshfree collocation and quadratic programming. 
\end{abstract}

\maketitle


\section{Introduction}

Lyapunov functions characterise the behaviour of a dynamical system. Expressing a dynamical system as a system of differential equations, a (strict) Lyapunov function is a function that decreases along solutions of this system. For a smooth function this is equivalent to it having a negative orbital derivative. Existence of a Lyapunov function implies stability and attractivity of an equilibrium and, moreover, determines the basin of attraction through sublevel sets. For relevant definitions and results see \cite{hahn1967stability}, \cite{zubov1961methods} and \cite{KhalilNonlinear}.\par
It is therefore a useful endeavour to investigate methods of obtaining a Lyapunov function. \cite{giesl2007meshless} introduced a meshfree collocation method to construct Lyapunov functions for autonomous ODEs with equilibrium at the origin by approximating a solution to the PDE $DV(\mathbf{x})=-\|\mathbf{x}\|^2$; $D$ represents the orbital derivative. In this method, a finitely many collocation points are selected and the goal is to find the norm-minimal function such that the PDE is satisfied at these points. The advantages of this method are numerous (\cite{giesl2007constructionRB}, \cite{giesl2018construction}): scattered points can be added, no triangulation of the state space is needed, the approximating function is smooth, the method works in any dimension and the resulting interpolation problem is uniquely solvable when expressed in terms of radial basis functions. \cite{giesl2018construction} extended this approach using quadratic programming to find an approximation that satisfies a system of partial differential inequalities, i.e. $DV(\mathbf{x})\leq{0}$.\par

We further generalize the method to address switched systems, an important class of dynamical systems where discrete switching events occur between continuous-time systems. These systems arise in a wide range of applications in sciences, engineering and economics, e.g. modeling cellular processes, air traffic control transitions, or thermostat regulation. We define a construction method for a Lyapunov function of a switched system with time- and state-dependent switching using meshfree collocation and quadratic programming.\par
Section \ref{secSwitched} reviews types of switched systems and existing construction methods. Section \ref{secConstruction} defines a switched system and its Lyapunov function, and presents our construction method, summarised in Algorithm \ref{algMain}. Theorem \ref{thmStability} links Lyapunov functions to asymptotic stability, while Theorem \ref{lemMain} confirms that our function satisfies the Lyapunov conditions. Section \ref{secEx} shows examples using Algorithm \ref{algMain} for three systems.

\section{Switched systems}
\label{secSwitched}

A switched system is a family of dynamical systems where the system that describes the dynamics changes with time and/or the position in the state space. 
A system where time-dependent uncontrolled switching takes place is referred to as an arbitrary switched system, and a system with state-dependent switching is referred to as a variable structure system. An overview of both can be found in \cite{liberzon2003switching}. Results for arbitrary switched systems can be found in \cite{hafstein2009algorithm}, where Lyapunov's direct method was generalised.\par
We consider switched systems that allow for both time- and state-dependent switching. Systems will be defined on subsets of the state space, with the union of these subsets making up the whole state space. 
The subsets may overlap; wherever this occurs, the system may switch arbitrarily to a different evolution.
In \cite{liberzon2003switching}, stability conditions for such a system are described. It states that these conditions are achieved if a Lyapunov function exists for each system in the subset where it is defined. This means that multiple Lyapunov functions can be considered. The method used to prove stability for a similar switched system found in \cite{hou1996stability} also uses multiple Lyapunov functions; solution trajectories are described as a sequence of switching times paired with the index of the system that is being switched to. The switched system is then stable if there exist functions that decrease wherever their respective system is \enquote{switched on}.\par
The method in \cite{hafstein2009algorithm} constructs Lyapunov functions for arbitrary switched systems, with an extension to a variable structure example. In \cite{hafstein2022sliding}, differential inclusions are used to construct Lyapunov functions for variable structure systems.

\section{Construction using Quadratic programming}
\label{secConstruction}

Let $\Omega\subset\bR^{d}$ be a bounded domain with Lipschitz boundary and define a switched system by autonomous systems of differential equations
\begin{equation}
    \label{eqnfis}
    \dot{\mathbf{x}}(t)=\mathbf{f}_{i}(\mathbf{x}(t))\text{, }i\in{I}:=\{1,\dots,k\}
\end{equation}
where $\mathbf{x}(t)\in\Omega\subset\bR^{d}$. We assume that the functions $\mathbf{f}_{i}$ defined on the domain $C_{i}\subset\Omega$, $\mathbf{0}\in{C_{i}}$ are locally Lipschitz continuous and have an equilibrium point at the origin. We require that $C_{i}=\overline{(C_{i})^{\circ}}$ and that the sets $C_{i}$ for all $i\in{I}$ are a covering of $\Omega$. In this paper we consider systems such that sliding modes are avoided, more details on these can be found in \cite{hafstein2022sliding}. A sliding mode can be interpreted as infinitely fast switching (\cite{liberzon2003switching}), so we require that for our systems there will be finite switches in finite time.\par
Switching in the system (\ref{eqnfis}) is when the index $i$ changes. We will use a definition of a switched system which allows both arbitrary and variable structure switching to take place, based on the domains $C_{i}$ of the functions $\mathbf{f}_{i}$. In any intersections of the domains (see (\ref{eqnOPDef})) $i$ can change arbitrarily between the values for which $\mathbf{f}_{i}$ is defined. If the boundary of one of these intersections is reached, $i$ might need to change to reflect the functions defined in this new subset. To illustrate this, consider Figure \ref{fig:intersectExample} where two functions $\mathbf{f}_{1}$, $\mathbf{f}_{2}$ are defined on the domains $C_{1}$, $C_{2}$ respectively. In the subset $C_{1}\cap{C}_{2}$ the system can switch arbitrarily between $i=1,2$, and if for example the boundary of the subset $C_{1}\cap{C}_{2}^{c}$ is reached, then the system will switch to $i=1$. Our algorithm will be applied to arbitrary switched, variable structure and combined systems in Examples \ref{exArb}, \ref{exVarStruct} and \ref{exCombo}.\par
A Lyapunov function of the switched system (\ref{eqnfis}) is a function that is non-increasing along the solutions of each differential equation in the system. For our purposes we say that $V:\Omega\rightarrow\bR$, $V(\mathbf{0})=0$ is a Lyapunov function for the system (\ref{eqnfis}) if and only if for all $i\in{I}$
\begin{equation}
    \label{eqnLyapSwitch}
    \nabla{V(\mathbf{x})}\cdot\mathbf{f}_{i}(\mathbf{x})\leq-\gamma(\|\mathbf{x}\|_{2})\text{ and }V(\mathbf{x})>0
\end{equation}
for all $\mathbf{x}\in{C_{i}}\setminus \{0\}$, where $\gamma$ is a function of class $\cK$, i.e. $\gamma\in{C([0,a),\bR^{+})}$ where $a>0$, $\gamma(0)=0$ and it is strictly monotonically increasing (\cite{KhalilNonlinear}). This $V$ is actually a strict Lyapunov function for the system (\ref{eqnfis}). Using this we can obtain the following stability result.
\begin{theorem}
    \label{thmStability}
    If there exists a function $V\in{C^{1}(\Omega)}$ that satisfies properties (\ref{eqnLyapSwitch}) for all $i\in{I}$, then the equilibrium $\mathbf{x}=\mathbf{0}$ of system (\ref{eqnfis}) is asymptotically stable.
\end{theorem}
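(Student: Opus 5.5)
We argue with a common Lyapunov function, using the fact that a single $C^{1}$ function $V$ decreases along every mode. Solutions of the switched system (\ref{eqnfis}) are concatenations of solutions of the individual systems $\dot{\mathbf{x}}=\mathbf{f}_{i}(\mathbf{x})$ on intervals $[t_{n},t_{n+1})$. On each interval the active index $i_{n}$ satisfies $\mathbf{x}(t)\in C_{i_{n}}$, and by the no-sliding assumption there are only finitely many switching times in any bounded time interval. The first step is to show that $t\mapsto V(\mathbf{x}(t))$ is continuous and piecewise $C^{1}$. On each interval the chain rule gives
\[
\frac{d}{dt}V(\mathbf{x}(t))=\nabla V(\mathbf{x}(t))\cdot\mathbf{f}_{i_{n}}(\mathbf{x}(t))\leq-\gamma(\|\mathbf{x}(t)\|_{2}),
\]
using (\ref{eqnLyapSwitch}) on $C_{i_{n}}$. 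At the endpoints the relevant one-sided derivatives also exist, because $C_{i}=\overline{(C_{i})^{\circ}}$ and $V\in C^{1}(\Omega)$, so the inequality extends by continuity. Since $\mathbf{x}(t)$ is continuous across switches and $V$ is a single function, $V(\mathbf{x}(t))$ has no jumps. Summing over the finitely many pieces on $[0,T]$ shows that $V(\mathbf{x}(t))$ is non-increasing, and that
\[
V(\mathbf{x}(T))\le V(\mathbf{x}(0))-\int_{0}^{T}\gamma(\|\mathbf{x}(s)\|_{2})\,ds .
\]

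Stability follows from the standard sublevel-set argument. Given $\varepsilon>0$ with $\overline{B_{\varepsilon}(\mathbf{0})}\subset\Omega$ (shrinking $\varepsilon$ if needed), set $m=\min_{\|\mathbf{x}\|=\varepsilon}V(\mathbf{x})$. This minimum is positive because $V$ is continuous and positive on the compact sphere, which lies in $\bigcup_{i}C_{i}\setminus\{0\}$. By continuity of $V$ and $V(\mathbf{0})=0$, choose $\delta<\varepsilon$ with $V<m$ on $B_{\delta}$. A solution starting in $B_{\delta}$ cannot reach the sphere of radius $\varepsilon$, since $V$ is non-increasing along it. Hence the solution stays in $B_{\varepsilon}\subset\Omega$ and, by local Lipschitz continuity and the finite-switch assumption, exists for all $t\ge0$.

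For attractivity, $V(\mathbf{x}(t))$ is non-increasing and bounded below by $0$, so it converges to some $c\ge0$. Suppose $c>0$. Then there is $r>0$ with $\|\mathbf{x}(t)\|\ge r$ for all $t$, because $V<c$ near the origin by continuity. Consequently $\frac{d}{dt}V\le-\gamma(r)<0$ almost everywhere. The integral inequality gives $V(\mathbf{x}(T))\le V(\mathbf{x}(0))-\gamma(r)T\to-\infty$, a contradiction. So $V(\mathbf{x}(t))\to0$. Finally, suppose $\|\mathbf{x}(t_{k})\|\ge\eta>0$ along some sequence $t_{k}\to\infty$. The trajectory lies in the compact set $\overline{B_{\varepsilon}}$, and $V\ge\min_{\eta\le\|\mathbf{x}\|\le\varepsilon}V>0$ there, which contradicts $V(\mathbf{x}(t))\to0$. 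Hence $\mathbf{x}(t)\to\mathbf{0}$.

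The main obstacle is the first step: making precise what a solution of (\ref{eqnfis}) is, namely its continuity at switching instants, the fact that it stays in $C_{i_{n}}$ while mode $i_{n}$ is active, and the finiteness of switches. This is what justifies the piecewise chain rule, and it is where the no-sliding hypothesis and $C_{i}=\overline{(C_{i})^{\circ}}$ are used. I would state this solution concept explicitly and then carry out the rest as in the classical Lyapunov theorem. Because $V$ is common to all modes and does not jump at switches, no multiple-Lyapunov-function bookkeeping is needed.
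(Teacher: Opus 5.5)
Your proposal is correct and follows the route the paper intends: the paper gives no detailed proof of Theorem~\ref{thmStability} and only defers to the classical Lyapunov proofs it cites, which is exactly your sublevel-set argument for stability and monotone-limit argument for attractivity, applied piecewise between switching instants with the single common $V$. The only extra ingredient is the one you identify, an explicit solution concept: continuity at switches, $\mathbf{x}(t)\in C_{i_n}$ while mode $i_n$ is active, finitely many switches on bounded intervals, and continuability. The paper leaves this at the same informal level through its no-sliding assumption.
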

This can be shown using methods similar to classical Lyapunov proofs that can be found in \cite{hahn1967stability}, \cite{zubov1961methods} and \cite{KhalilNonlinear}.\par
Now we introduce the algorithm that we will use to construct Lyapunov functions of our systems (\ref{eqnfis}); a summary of where the details of each step and definitions of any notation used can be found in the following sections is provided underneath.
\begin{algorithm}[H]
\caption{Lyapunov functions for Switched Systems}
\label{algMain}
    \begin{algorithmic}[1]
        \REQUIRE State space $\Omega$ and partition $\Ptt$ for $\theta=1,\dots,\Theta$ (see Section \ref{secPartition}), functions $\mathbf{f}_{i}(\mathbf{x})$ (\ref{eqnfis}), Wendland function $\Psi_0(r)$ (see Section \ref{secRKHS}) and evaluation points.
        \STATE Fix collocation points $X^{\theta}=\{\xltt\}_{1\leq\ell\leq\Ntt}$ in each set $\Ptt$, $\theta=1,\dots,\Theta$.
        \STATE If $0$ is selected as a collocation point remove it.
        \STATE For each collocation point $\xltt$ compute a basis of the vectors $\mathbf{f}_{i}(\xltt)$ and use this to find the values of the coefficients $\aassiltt$ (\ref{eqnllssltt}).
        \STATE Find $Q$ (\ref{eqnQ}).
        \STATE Find $A$ (\ref{eqnAelem}).
        \STATE Solve a quadratic programming problem for $\beta$ (\ref{eqnbTAb}).
        \STATE Find values of $v$ (\ref{eqnLyapv}) at evaluation points and plot. For all $i\in{I}$ find values of the orbital derivatives $\nabla{v}\cdot\mathbf{f}_{i}$ at evaluation points in $C_i$ and plot.
     \end{algorithmic}
\end{algorithm}
Section \ref{secMinimise} contains the definition of 
 the minimisation  problem (\ref{eqnMinProbv}) we are looking to solve in order to construct a Lyapunov function. Details on steps $3$ and $4$ are found in Section \ref{secBasis}, and on steps $5$, $6$ and $7$ in Section \ref{secQuadProg}, including  
 Theorem \ref{lemMain}, which verifies that our function $v$ (\ref{eqnLyapv}) is a solution of the minimisation problem (\ref{eqnMinProbv}).

\subsection{Partitioning the State Space}
\label{secPartition}

We define the following sets for ${P}\subseteq{I}$, $P\neq\emptyset$
\begin{equation}
    \label{eqnOPDef}
    \OP:=\bigg(\bigcap_{i\in{P}}C_{i}\bigg)\cap\bigg(\bigcap_{j\in{I\setminus{P}}}C_{j}^{c}\bigg).
\end{equation}
An example of this partition is shown in Figure \ref{fig:intersectExample}. Here we have two functions $\mathbf{f}_{1}$, $\mathbf{f}_{2}$ defined on the domains $C_{1}$, $C_{2}$ respectively. In this case, $I=\{1,2\}$ and the possible subsets $P$ are $\{1\}$, $\{2\}$ and $\{1,2\}$. $\OP$ consists of the three subsets $C_{1}\cap{C_{2}^{c}}$, $C_{1}\cap{C_{2}}$ and $C_{1}^{c}\cap{C_{2}}$.\par
Now for use in our algorithm we label these subsets. We will refer to the total number of subsets $P$ as $\Theta$ and label these subsets as $\Ptt$ where $\theta=1,\dots,\Theta$. We can write the elements of each of these sets $\Ptt$ as $\pitt$, with $i=1,\dots,\cPtt$ where $\cPtt:=|\Ptt|$. In Figure \ref{fig:intersectExample} we have $\Theta=3$, $P^{1}=\{1\}$ with $p_{1}^{1}=1$, $P^{2}=\{1,2\}$ with $p_{1}^{2}=1$ and $p_{2}^{2}=2$, and $P^{3}=\{2\}$ with $p_{1}^{3}=2$.
\begin{figure}
    \centering
    
     
    \tikzset{
    pattern size/.store in=\mcSize, 
    pattern size = 5pt,
    pattern thickness/.store in=\mcThickness, 
    pattern thickness = 0.3pt,
    pattern radius/.store in=\mcRadius, 
    pattern radius = 1pt}
    \makeatletter
    \pgfutil@ifundefined{pgf@pattern@name@_fsapumgd2}{
    \pgfdeclarepatternformonly[\mcThickness,\mcSize]{_fsapumgd2}
    {\pgfqpoint{0pt}{0pt}}
    {\pgfpoint{\mcSize+\mcThickness}{\mcSize+\mcThickness}}
    {\pgfpoint{\mcSize}{\mcSize}}
    {
    \pgfsetcolor{\tikz@pattern@color}
    \pgfsetlinewidth{\mcThickness}
    \pgfpathmoveto{\pgfqpoint{0pt}{0pt}}
    \pgfpathlineto{\pgfpoint{\mcSize+\mcThickness}{\mcSize+\mcThickness}}
    \pgfusepath{stroke}
    }}
    \makeatother
    
     
    \tikzset{
    pattern size/.store in=\mcSize, 
    pattern size = 5pt,
    pattern thickness/.store in=\mcThickness, 
    pattern thickness = 0.3pt,
    pattern radius/.store in=\mcRadius, 
    pattern radius = 1pt}
    \makeatletter
    \pgfutil@ifundefined{pgf@pattern@name@_hqemfbp3e}{
    \pgfdeclarepatternformonly[\mcThickness,\mcSize]{_hqemfbp3e}
    {\pgfqpoint{0pt}{-\mcThickness}}
    {\pgfpoint{\mcSize}{\mcSize}}
    {\pgfpoint{\mcSize}{\mcSize}}
    {
    \pgfsetcolor{\tikz@pattern@color}
    \pgfsetlinewidth{\mcThickness}
    \pgfpathmoveto{\pgfqpoint{0pt}{\mcSize}}
    \pgfpathlineto{\pgfpoint{\mcSize+\mcThickness}{-\mcThickness}}
    \pgfusepath{stroke}
    }}
    \makeatother
    
     
    \tikzset{
    pattern size/.store in=\mcSize, 
    pattern size = 5pt,
    pattern thickness/.store in=\mcThickness, 
    pattern thickness = 0.3pt,
    pattern radius/.store in=\mcRadius, 
    pattern radius = 1pt}
    \makeatletter
    \pgfutil@ifundefined{pgf@pattern@name@_hay99zx4i}{
    \pgfdeclarepatternformonly[\mcThickness,\mcSize]{_hay99zx4i}
    {\pgfqpoint{0pt}{0pt}}
    {\pgfpoint{\mcSize+\mcThickness}{\mcSize+\mcThickness}}
    {\pgfpoint{\mcSize}{\mcSize}}
    {
    \pgfsetcolor{\tikz@pattern@color}
    \pgfsetlinewidth{\mcThickness}
    \pgfpathmoveto{\pgfqpoint{0pt}{0pt}}
    \pgfpathlineto{\pgfpoint{\mcSize+\mcThickness}{\mcSize+\mcThickness}}
    \pgfusepath{stroke}
    }}
    \makeatother
    
     
    \tikzset{
    pattern size/.store in=\mcSize, 
    pattern size = 5pt,
    pattern thickness/.store in=\mcThickness, 
    pattern thickness = 0.3pt,
    pattern radius/.store in=\mcRadius, 
    pattern radius = 1pt}
    \makeatletter
    \pgfutil@ifundefined{pgf@pattern@name@_his1blmqs}{
    \pgfdeclarepatternformonly[\mcThickness,\mcSize]{_his1blmqs}
    {\pgfqpoint{0pt}{-\mcThickness}}
    {\pgfpoint{\mcSize}{\mcSize}}
    {\pgfpoint{\mcSize}{\mcSize}}
    {
    \pgfsetcolor{\tikz@pattern@color}
    \pgfsetlinewidth{\mcThickness}
    \pgfpathmoveto{\pgfqpoint{0pt}{\mcSize}}
    \pgfpathlineto{\pgfpoint{\mcSize+\mcThickness}{-\mcThickness}}
    \pgfusepath{stroke}
    }}
    \makeatother
    
     
    \tikzset{
    pattern size/.store in=\mcSize, 
    pattern size = 5pt,
    pattern thickness/.store in=\mcThickness, 
    pattern thickness = 0.3pt,
    pattern radius/.store in=\mcRadius, 
    pattern radius = 1pt}
    \makeatletter
    \pgfutil@ifundefined{pgf@pattern@name@_l2pcqf36c}{
    \pgfdeclarepatternformonly[\mcThickness,\mcSize]{_l2pcqf36c}
    {\pgfqpoint{0pt}{0pt}}
    {\pgfpoint{\mcSize+\mcThickness}{\mcSize+\mcThickness}}
    {\pgfpoint{\mcSize}{\mcSize}}
    {
    \pgfsetcolor{\tikz@pattern@color}
    \pgfsetlinewidth{\mcThickness}
    \pgfpathmoveto{\pgfqpoint{0pt}{0pt}}
    \pgfpathlineto{\pgfpoint{\mcSize+\mcThickness}{\mcSize+\mcThickness}}
    \pgfusepath{stroke}
    }}
    \makeatother
    
     
    \tikzset{
    pattern size/.store in=\mcSize, 
    pattern size = 5pt,
    pattern thickness/.store in=\mcThickness, 
    pattern thickness = 0.3pt,
    pattern radius/.store in=\mcRadius, 
    pattern radius = 1pt}
    \makeatletter
    \pgfutil@ifundefined{pgf@pattern@name@_7jwpffzob}{
    \pgfdeclarepatternformonly[\mcThickness,\mcSize]{_7jwpffzob}
    {\pgfqpoint{0pt}{-\mcThickness}}
    {\pgfpoint{\mcSize}{\mcSize}}
    {\pgfpoint{\mcSize}{\mcSize}}
    {
    \pgfsetcolor{\tikz@pattern@color}
    \pgfsetlinewidth{\mcThickness}
    \pgfpathmoveto{\pgfqpoint{0pt}{\mcSize}}
    \pgfpathlineto{\pgfpoint{\mcSize+\mcThickness}{-\mcThickness}}
    \pgfusepath{stroke}
    }}
    \makeatother
    \tikzset{every picture/.style={line width=0.75pt}} 
    
    \begin{tikzpicture}[x=0.75pt,y=0.75pt,yscale=-1,xscale=1]
    
    \draw  [line width=1.5]  (33.05,33.1) -- (165.55,33.1) -- (165.55,121.44) -- (33.05,121.44) -- cycle ;
    \draw  [pattern=_fsapumgd2,pattern size=9pt,pattern thickness=0.75pt,pattern radius=0pt, pattern color={rgb, 255:red, 0; green, 0; blue, 0}] (33.05,33.1) .. controls (33.32,33.23) and (141.63,32.81) .. (142.36,33.54) .. controls (143.09,34.27) and (81.81,46.67) .. (84.73,62.72) .. controls (87.64,78.77) and (148.19,83.15) .. (144.54,97.01) .. controls (140.9,110.87) and (125.49,121.5) .. (124.92,121.5) .. controls (124.36,121.5) and (32.38,120.94) .. (33.05,121.44) .. controls (33.72,121.94) and (32.78,32.97) .. (33.05,33.1) -- cycle ;
    \draw  [pattern=_hqemfbp3e,pattern size=9pt,pattern thickness=0.75pt,pattern radius=0pt, pattern color={rgb, 255:red, 0; green, 0; blue, 0}] (33.05,33.1) .. controls (33.66,33.54) and (99.57,33.37) .. (100.1,33.37) .. controls (100.64,33.37) and (165.95,33.11) .. (165.55,33.1) .. controls (165.15,33.1) and (165.95,121.71) .. (165.55,121.44) .. controls (165.15,121.17) and (110.99,121.08) .. (110.26,121.08) .. controls (109.53,121.08) and (66.49,99.2) .. (71.59,78.77) .. controls (76.7,58.34) and (32.93,53.97) .. (32.93,53.97) .. controls (32.93,53.97) and (32.44,32.66) .. (33.05,33.1) -- cycle ;
    \draw  [pattern=_hay99zx4i,pattern size=9pt,pattern thickness=0.75pt,pattern radius=0pt, pattern color={rgb, 255:red, 0; green, 0; blue, 0}][line width=0.75]  (174.45,39.01) -- (189.04,39.01) -- (189.04,53.6) -- (174.45,53.6) -- cycle ;
    \draw  [pattern=_his1blmqs,pattern size=9pt,pattern thickness=0.75pt,pattern radius=0pt, pattern color={rgb, 255:red, 0; green, 0; blue, 0}][line width=0.75]  (174.45,97.37) -- (189.04,97.37) -- (189.04,111.96) -- (174.45,111.96) -- cycle ;
    \draw   (174.45,68.19) -- (189.04,68.19) -- (189.04,82.78) -- (174.45,82.78) -- cycle ;
    \draw  [pattern=_l2pcqf36c,pattern size=9pt,pattern thickness=0.75pt,pattern radius=0pt, pattern color={rgb, 255:red, 0; green, 0; blue, 0}] (174.45,68.19) -- (189.04,68.19) -- (189.04,82.78) -- (174.45,82.78) -- cycle ;
    \draw  [pattern=_7jwpffzob,pattern size=9pt,pattern thickness=0.75pt,pattern radius=0pt, pattern color={rgb, 255:red, 0; green, 0; blue, 0}][line width=0.75]  (174.45,68.19) -- (189.04,68.19) -- (189.04,82.78) -- (174.45,82.78) -- cycle ;
    
    \draw (190.41,65.93) node [anchor=north west][inner sep=0.75pt]  [font=\footnotesize]  {$=C_{1} \cap C_{2} =\Omega ^{\{1,2\}} =\Omega {^{P}}^{2}$};
    \draw (151.8,18.41) node [anchor=north west][inner sep=0.75pt]    {$\Omega $};
    \draw (191.36,36.56) node [anchor=north west][inner sep=0.75pt]  [font=\footnotesize]  {$=C_{1} \cap C_{2}^{c} =\Omega ^{\{1\}} =\Omega {^{P}}^{1}$};
    \draw (191.36,94.92) node [anchor=north west][inner sep=0.75pt]  [font=\footnotesize]  {$=C_{1}^{c} \cap C_{2} =\Omega ^{\{2\}} =\Omega {^{P}}^{3}$};

    \end{tikzpicture}

    \caption{An example state space $\Omega$ with two functions $\mathbf{f}_{1}$ and $\mathbf{f}_{2}$ with domains $C_{1}$ and $C_{2}$, respectively. This divides the whole state space into three separate subsets using definition (\ref{eqnOPDef}), the intersection of the two domains and the two sets where either only $\mathbf{f}_{1}$ or only $\mathbf{f}_{2}$ is defined. This tells us that $\Theta=3$ and we label $P^{1}=\{1\}$, $P^{2}=\{1,2\}$ and $P^{3}=\{2\}$.} 
    \label{fig:intersectExample}
\end{figure}

\subsection{Reproducing Kernel Hilbert Spaces}
\label{secRKHS}

We construct a function $v$ as a solution of the minimisation problem (\ref{eqnMinProbv}). It is the optimal reconstruction of a function $V$ satisfying (\ref{eqnDV}), which is a Lyapunov function for our system (\ref{eqnfis}). In the same way as \cite{giesl2018construction}, $v$ will belong to a Reproducing Kernel Hilbert space $H$ 
with reproducing kernel $\Phi:\Omega\times\Omega\rightarrow\bR$.
\begin{defn}{\rm (Reproducing Kernel Hilbert Space (RKHS))}
    \label{defnRKHS}
    Let $\Omega\subseteq\bR^{d}$. A Hilbert space $H=H(\Omega)$ of continuous functions $f:\Omega\rightarrow\bR$ with inner product $\langle\cdot,\cdot\rangle_{H}$ is called a reproducing kernel Hilbert space, abbreviated RKHS, if there is a function $\Phi:\Omega\times\Omega\rightarrow\bR$ with
    \begin{enumerate}
        \item $\Phi(\cdot,\mathbf{x})\in{H}$ for all ${\mathbf{x}\in\Omega}$
        \item $g(\mathbf{x})=\langle{g(\cdot),\Phi(\cdot,\mathbf{x})}\rangle_{H}$ for all $g\in{H}$ and all $\mathbf{x}\in\Omega$.
    \end{enumerate}
    The function $\Phi$ is called the reproducing kernel of $H$. The reproducing kernel is called positive definite, if for any pair of pairwise distinct points $\{x_{1},...,x_{N}\}\subseteq\Omega$ the matrix $(\Phi(x_{i},x_{j}))_{i,j=1,...,N}$ is positive definite.
\end{defn}
In our case, the reproducing kernel is given by a radial basis function $\Psi_{0}$ through $\Phi(\mathbf{x},\mathbf{y}):=\Psi_{0}(\|\mathbf{x}-\mathbf{y}\|_{2})$. We choose this radial basis function to be a Wendland function $\phi_{\ell,k}(cr)$ (\cite{wendland1995piecewise}; \cite{wendland1998error}) where $c>0$, $k\in\bN$ is a smoothness parameter and $\ell=\lfloor\frac{d}{2}\rfloor+k+1$. These are smooth functions with compact support, which are polynomials on their support. We can then define recursively $\Psi_{i}(r)=\frac{1}{r}\frac{d\Psi_{i-1}(r)}{dr}$ for $i=1,2$ and $r>0$. Note that $\lim_{r\rightarrow{0}}\Psi_{i}(r)$ exists if the smoothness parameter $k$ of the Wendland function is sufficiently large.

\subsection{Minimisation problem}
\label{secMinimise}

Using this set-up, we will construct a Lyapunov function for our switched system using quadratic programming, as done for autonomous ODEs in \cite{giesl2018construction}. There they construct a Lyapunov function for a single system by finding a solution to the system of partial differential inequalities $DV(\mathbf{x})\leq{r}(\mathbf{x})$ where $D$ denotes the orbital derivative, and $r(\mathbf{x})$ is negative. This method can be generalised in order to construct a Lyapunov function for a switched system. We define for all $\theta=1\dots,\Theta$ and $i=1,\dots,\cPtt$ the linear differential operator
\begin{equation}
    \label{eqnDpitt}
    D_{\pitt}V(\mathbf{x}):=\nabla{
    V}(\mathbf{x})\cdot\mathbf{f}_{\pitt}(\mathbf{x})\text{ }\forall\mathbf{x}\in\OPtt.
\end{equation}
Our system of partial differential inequalities is then written as
\begin{equation}
    \label{eqnDV}
    D_{\pitt}V(\mathbf{x})\leq{b_{\pitt}}(\mathbf{x})
\end{equation}
where $b_{\pitt}(\mathbf{x})$ is negative for all $\mathbf{x}\in\OPtt\setminus{\{\mathbf{0}\}}$, $b_{\pitt}(\mathbf{0})=0$. Finding a function that satisfies these inequalities is then equivalent to finding a Lyapunov function as in (\ref{eqnLyapSwitch}).\par
We now discretise our problem by choosing collocation points
\begin{equation}
    \label{eqnCollPts}
    \mathbf{x}^{1,\theta},\dots,\mathbf{x}^{\Ntt,\theta}\in\OPtt\nonumber
\end{equation}
and values
\begin{equation}
\label{eqnValb}
    b_{i}^{1,\theta}=b_{\pitt}(\mathbf{x}^{1,\theta}),\dots,b_{i}^{\Ntt,\theta}=b_{\pitt}(\mathbf{x}^{\Ntt,\theta})\in\bR_{0}^{+}.\nonumber
\end{equation}
We seek to find a function $v$ that satisfies
\begin{equation}
    \label{eqnDiscretProb}
    D_{\pitt}v(\xltt)\leq\biltt\text{ for all }\ell=1,\dots,\Ntt
\end{equation}
for $\theta=1,\dots,\Theta$ and $i=1,\dots,\cPtt$, where $\Ntt$ is the number of collocation points chosen in the subset $\OPtt$.\par
Another way to write this that is more suitable for our purpose would be to say we look to find an appropriate function $v\in{H}$ using the information $\biltt\in\bR$ given by the functionals $\lliltt\in{H}^{*}$, i.e. $\lliltt(v)\leq\biltt$ for all $\theta=1,\dots,\Theta$, $i=1,\dots,\cPtt$ and $\ell=1,\dots,\Ntt$. These functionals are defined as our differential operator (\ref{eqnDpitt}) applied at a specific collocation point, i.e.
\begin{equation}
    \label{eqnlliltt}
    (\lliltt)v(\cdot):=(\delta_{\xltt}\circ{D_{\pitt}})v(\cdot)=\nabla{v}(\xltt)\cdot\mathbf{f}_{\pitt}(\xltt)
\end{equation}
for all $\theta=1,\dots,\Theta$, $\ell=1,\dots,\Ntt$ and $i=1,\dots,\cPtt$, with $\delta$ being Dirac's delta distribution. Note that the elements of the set $\Ptt$ will be reordered for each collocation point in the following section when we select a basis of the functions $\mathbf{f}_{i}$ at this point, see (\ref{eqnPltt}).\par
The optimal reconstruction of $v$ is given by the solution of the problem
\begin{equation}
    \label{eqnMinProbv}
    \min\{\|v\|_{H}:v\in{H}\text{, }\lliltt(v)\leq\biltt\text{ for all }\theta=1,\dots,\Theta\text{, }\ell=1,\dots,\Ntt\text{ and }i=1,\dots,\cPtt\}
\end{equation}
It can be shown that this problem has at most one solution, using a similar proof to that of Lemma 5.1 in \cite{giesl2018construction}.

\subsection{Selection of Basis}
\label{secBasis}

If the functionals are linearly independent, we can use techniques from \cite{wendland2004scattered} to find a solution of the problem (\ref{eqnMinProbv}). However, this is not necessarily the case. We fix a point $\xltt\in\OPtt$, consider whether $\mathbf{f}_{\pitt}(\xltt)$ for $i=1,\dots,\cPtt$ are linearly independent, and if not, select a basis of these vectors.\par 
To this end, fix $\xltt\in\OPtt$ and define the set 
\begin{equation}
    \label{eqnRltt}
    \Rltt:=\{r_{1}^{\ell,\theta},\dots,r_{\rrltt}^{\ell,\theta}\}\subseteq\Ptt\nonumber
\end{equation}
to be a set of indices such that the vectors $\mathbf{f}_{\riltt}(\xltt)$ are a basis of $(\mathbf{f}_{\pitt}(\xltt))_{\pitt\in\Ptt}$ (with $\rrltt$ being the rank of this matrix). The elements of $\Ptt\setminus{\Rltt}$ will be labeled $\pi_{s}^{\ell,\theta}$ where $s=1,\dots,\cPtt-\rrltt$.\par
This means we can consider the vector 
\begin{equation}
    \label{eqnPltt}
    \Pltt=\big(r_{1}^{\ell,\theta},\dots,r_{\rrltt}^{\ell,\theta},\pi_{1}^{\ell,\theta},\dots,\pi_{\cPtt-\rrltt}^{\ell,\theta}\big)
\end{equation}
and label these coordinates $\piltt\in\Pltt$ for $i=1,\dots,\cPtt$. This means we can represent an ordering of the set $\Ptt$ for the point $\xltt$, where we have placed the indices that represent the basis at the beginning of the vector. The vector (\ref{eqnPltt}) is found for each of our collocation points $\xltt$ for $\theta=1,\dots,\Theta$ and $\ell=1,\dots,\Ntt$.\par
From this point forwards when we refer to the functionals $\lliltt$ we mean (\ref{eqnlliltt}) with $\pitt$ instead being elements $\piltt$ from the vector $\Pltt$ (\ref{eqnPltt}).\par
For notation purposes we define
\begin{equation}
    \label{eqnRhocP}
    \rho:=\sum_{\theta=1}^{\Theta}\sum_{\ell=1}^{\Ntt}\rrltt\text{ and }\cP:=\sum_{\theta=1}^{\Theta}\sum_{\ell=1}^{\Ntt}\cPtt.\nonumber
\end{equation}
By (\ref{eqnPltt}) the functionals $\lliltt$ for $i=1,\dots,\rrltt$ are a basis for the whole set of functionals with $i=1,\dots,\cPtt$. The functionals at the point $\xltt$ can be expressed as a linear combination of this basis
\begin{equation}
    \label{eqnllssltt}
    \llssltt=\sum_{i=1}^{\rrltt}\aassiltt\lliltt
\end{equation}
for all $\sigma=1,\dots,\cPtt$, for some appropriate $\aassiltt\in\bR$. We assemble the coefficients of all collocation points $\xltt$ 
as elements of a matrix $Q\in\bR^{\cP\times\rho}$ in the following way
\begin{equation}
    \label{eqnQ}
    Q:=\left(\begin{array}{c}
    I\\
    \Tilde{Q}
    \end{array}\right)
\end{equation}
where $I$ is the $\rho\times\rho$ identity matrix (accounting for the elements $\alpha_{s,i}^{\ell,\theta}$ for $\theta=1,\dots,\Theta$, $\ell=1,\dots,\Ntt$, $s,i=1,\dots,\rrltt$) and $\Tilde{Q}$ is a block diagonal matrix 
\begin{equation}
    \label{eqnQdiag}
    \Tilde{Q}:=\diag(\Tilde{Q}^{1,1},\dots,\Tilde{Q}^{N^{1},1},\dots,\Tilde{Q}^{1,\Theta},\dots,\Tilde{Q}^{N^{\Theta},\Theta})
\end{equation}
where the matrices $\Tilde{Q}^{\ell,\theta}$ have entries $q^{\ell,\theta}_{s,i}:=\alpha_{s+\rrltt,i}^{\ell,\theta}$ for $\theta=1,\dots,\Theta$, $\ell=1,\dots,\Ntt$, $s=1,\dots,\cPtt-\rrltt$ and $i=1,\dots,\rrltt$.

\subsection{Quadratic Programming Problem}
\label{secQuadProg}

We show that the solution of (\ref{eqnMinProbv}) is of the form
\begin{equation}
    \label{eqnLyapv}
    v(\mathbf{x})=\sum_{\theta=1}^{\Theta}\sum_{\ell=1}^{\Ntt}\sum_{i=1}^{\rrltt}\bbiltt(\lliltt)^{\mathbf{y}}\Phi(\mathbf{x},\mathbf{y})
\end{equation}
with $(\lliltt)^{\mathbf{y}}\Phi(\mathbf{x},\mathbf{y})=\nabla_{\mathbf{y}}({\Phi}(\mathbf{x},\mathbf{y}))\big|_{\mathbf{y}=\xltt}\cdot\mathbf{f}_{\piltt}(\xltt)$, where the coefficients $\bbiltt$ satisfy
\begin{equation}
    \label{eqnbTAb}
    \begin{cases}
        \text{\rm{minimise }}\beta^{T}A\beta\\
        \text{\rm{subject to }}QA\beta\leq\mathbf{b}
    \end{cases}
\end{equation}
To find these matrices, we have applied the functionals to our proposed solution (\ref{eqnLyapv}) and used the inequalities in (\ref{eqnMinProbv}).
The elements of the matrix $A\in\bR^{\rho\times\rho}$ are
\begin{eqnarray}
    \aijtteelm & := & (\delta_{\xeett}\circ{D_{\pjmee}})^{\mathbf{x}}(\delta_{\xltt}\circ{D_{\piltt}})^{\mathbf{y}}\Phi(\mathbf{x},\mathbf{y})\nonumber\\
    \label{eqnAelem}
    & = & \langle(\lljmee)^{\mathbf{y}}\Phi(\cdot,\mathbf{y}),(\lliltt)^{\mathbf{x}}\Phi(\cdot,\mathbf{x})\rangle_{H}
\end{eqnarray}
for $\theta,\eta=1,\dots,\Theta$, $\ell=1,\dots,\Ntt$, $m=1,\dots,\Nee$, $i=1,\dots,\rrltt$ and $j=1,\dots,\rrmee$. The symmetric matrix $A$ accounts for the linearly independent functionals, and we apply the matrix $Q$ as the other functionals are expressed as linear combinations of the independent ones (\ref{eqnllssltt}). The elements of the vector $\mathbf{b}$ are the values $\biltt$ from the minimisation problem (\ref{eqnMinProbv}) for $\theta=1,\dots,\Theta$, $\ell=1,\dots,\Ntt$ and $i=1,\dots,\cPtt$.\par
Similarly to \cite{giesl2018construction}, minimising $\beta^{T}A\beta$ is the same as minimising $\|v\|_{H}$.\par
The following lemma contributes to Theorem \ref{lemMain}.
\begin{lemma}
    \label{lemUnibTAb}
    If there is a feasible solution to the problem (\ref{eqnbTAb}), then this solution is unique.
\end{lemma}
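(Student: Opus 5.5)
The plan is to show that $A$ is symmetric positive definite. Then $\beta\mapsto\beta^{T}A\beta$ is strictly convex, and the feasible set $\{\beta: QA\beta\leq\mathbf{b}\}$ is a convex polyhedron. A strictly convex continuous function that is coercive (because $A$ is positive definite) attains its minimum on a nonempty closed convex set at exactly one point. Uniqueness follows by the standard argument: if $\beta_1\neq\beta_2$ were both minimisers with value $m$, then $\tfrac12(\beta_1+\beta_2)$ is feasible by convexity. Moreover,
\begin{equation*}
\Big(\tfrac{\beta_1+\beta_2}{2}\Big)^{T}A\Big(\tfrac{\beta_1+\beta_2}{2}\Big)=\tfrac12\beta_1^{T}A\beta_1+\tfrac12\beta_2^{T}A\beta_2-\tfrac14(\beta_1-\beta_2)^{T}A(\beta_1-\beta_2)<m,
\end{equation*}
which contradicts minimality. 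Existence of a minimiser, given feasibility, follows from coercivity and closedness of the feasible set.

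The key step is positive definiteness of $A$. By (\ref{eqnAelem}), $A$ is the Gram matrix in $H$ of the Riesz representers $g_{i}^{\ell,\theta}:=(\lliltt)^{\mathbf{x}}\Phi(\cdot,\mathbf{x})$ of the functionals $\lliltt$, for $i=1,\dots,\rrltt$. So for any $\beta$ we have
\begin{equation*}
\beta^{T}A\beta=\Big\|\sum_{\theta,\ell,i}\bbiltt g_{i}^{\ell,\theta}\Big\|_{H}^{2}\geq 0,
\end{equation*}
with equality iff $\sum\bbiltt\lliltt=0$ in $H^{*}$. It therefore suffices to show that the selected functionals $\lliltt=\delta_{\xltt}\circ D_{\piltt}$, $i\le\rrltt$, are linearly independent in $H^{*}$. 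At a fixed point $\xltt$, the functional $\sum_i\bbiltt\lliltt$ is $v\mapsto\nabla v(\xltt)\cdot\mathbf{w}$ with $\mathbf{w}=\sum_i\bbiltt\mathbf{f}_{\riltt}(\xltt)$. By the basis selection in Section \ref{secBasis}, $\mathbf{w}=\mathbf{0}$ only if all these coefficients vanish.

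The main obstacle is combining different points. The collocation points are pairwise distinct, and $\mathbf{0}$ has been removed by Step 2 of Algorithm \ref{algMain}. Also, at every nonzero collocation point at least one $\mathbf{f}_{i}(\xltt)\neq\mathbf{0}$, since the origin is the only equilibrium considered; if some basis set were empty it contributes nothing. I would invoke the standard result from \cite{wendland2004scattered} (as used in \cite{giesl2007meshless}, \cite{giesl2018construction}): for Wendland kernels with sufficient smoothness $k$, the functionals $\delta_{\mathbf{x}_j}\circ(\mathbf{w}_j\cdot\nabla)$ at pairwise distinct points with nonzero directions are linearly independent. The argument goes through the Fourier transform of $\Phi$, which is positive, so that $\|\sum_j\delta_{\mathbf{x}_j}\circ(\mathbf{w}_j\cdot\nabla)\|_{H^*}^2=\int\hat\Phi(\omega)\,|\sum_j i(\omega\cdot\mathbf{w}_j)e^{i\omega\cdot\mathbf{x}_j}|^2\,d\omega$. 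This vanishes only if the trigonometric-polynomial expression vanishes identically, forcing every $\mathbf{w}_j=\mathbf{0}$ by independence of the exponentials $e^{i\omega\cdot\mathbf{x}_j}$ for distinct $\mathbf{x}_j$. Hence $\beta=0$, $A$ is positive definite, and uniqueness follows as above.
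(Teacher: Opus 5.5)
Your proposal is correct and is essentially the paper's argument: positive definiteness of $A$ makes the quadratic objective strictly convex over a convex polyhedral feasible set, which gives uniqueness. The paper phrases this after substituting $\mathbf{r}=A\beta$ and minimising $\mathbf{r}^{T}A^{-1}\mathbf{r}$ under affine constraints, and it only asserts positive definiteness of $A$ and existence of a minimiser. You justify both: positive definiteness via the Gram-matrix/Fourier argument combined with the basis selection, and existence via coercivity. One small refinement in the Fourier step: the coefficients $\omega\cdot\mathbf{w}_j$ depend on $\omega$, so you need independence of the exponentials over polynomial weights, not just constants. This follows, e.g., by restricting to lines $\omega=t\mathbf{u}$ with distinct values $\mathbf{u}\cdot\mathbf{x}_j$.
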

\begin{proof}
    Given the definition of $Q$ as in (\ref{eqnQ}), we can rewrite problem (\ref{eqnbTAb}) as follows
    \begin{equation}
        \label{lemUnibTAbproofeqn1}
        \begin{cases}
            \text{\rm{minimise }}\beta^{T}A\beta\\
            \text{\rm{subject to }}A\beta=:\mathbf{r}\leq\mathbf{c}\\
            \phantom{xxxxxxxx}\Tilde{Q}A\beta=:\Tilde{\mathbf{r}}\leq\Tilde{\mathbf{c}}
        \end{cases}
    \end{equation}
    where, for $\theta=1,\dots,\Theta$ and $\ell=1,\dots,\Ntt$, elements of $\mathbf{c}$ are $\biltt$ for $i=1,\dots,\rrltt$, and elements of $\Tilde{\mathbf{c}}$ are $b_{s+\rrltt}^{\ell,\theta}$ for $s=1,\dots,\cPtt-\rrltt$.\par
    The matrix $A$ is positive definite since the functionals included in the values of its elements (\ref{eqnAelem}) are linearly independent. As $A$ is also symmetric, we  have 
     $\beta^{T}A\beta=\mathbf{r}^{T}A^{-1}\mathbf{r}$ and, using that $\Tilde{Q}\mathbf{r}=\Tilde{\mathbf{r}}$, we can rewrite (\ref{lemUnibTAbproofeqn1}) as
    \begin{equation}
        \label{lemUnibTAbproofeqn2}
        \begin{cases}
            \text{\rm{minimise }}\mathbf{r}^{T}A^{-1}\mathbf{r}\\
            \text{\rm{subject to }}\mathbf{r}-\mathbf{c}\leq\mathbf{0}\\
            \phantom{xxxxxxxx}\Tilde{Q}\mathbf{r}-\Tilde{\mathbf{c}}\leq{\mathbf{0}}
        \end{cases}
    \end{equation}
    The functions $g_{1}(\mathbf{r}):=\mathbf{r}-\mathbf{c}$ and $g_{2}(\mathbf{r}):=\Tilde{Q}\mathbf{r}-\Tilde{\mathbf{c}}$ which correspond to the constraints of (\ref{lemUnibTAbproofeqn2}) are both affine and therefore convex. As $A^{-1}$ is positive definite, the function $h(\mathbf{r})=\mathbf{r}^{T}A^{-1}\mathbf{r}$ is strictly convex. Overall this means, using the definition of a convex optimisation problem from \cite{Boyd_Vandenberghe_2004}, that (\ref{lemUnibTAbproofeqn2}) has a unique solution if it is feasible.
\end{proof}
The main result of this section is then as follows.
\begin{theorem}
    \label{lemMain}
    If there is a feasible solution to the problem (\ref{eqnbTAb}), then the unique minimiser of the problem (\ref{eqnMinProbv}) is of the form (\ref{eqnLyapv}) where the coefficients $\bbiltt$ are determined by the solution of the problem (\ref{eqnbTAb}).
\end{theorem}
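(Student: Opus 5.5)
The plan is to reduce (\ref{eqnMinProbv}) to a finite-dimensional problem by orthogonal projection in $H$, and then identify that problem with (\ref{eqnbTAb}).

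\textbf{Step 1: reduce to the independent functionals.} Let $\beta$ be the unique feasible minimiser of (\ref{eqnbTAb}), which exists by Lemma \ref{lemUnibTAb}; existence holds because the objective $\mathbf{r}^{T}A^{-1}\mathbf{r}$ is coercive on a nonempty closed feasible set. Write
\[
W:=\operatorname{span}\{(\lliltt)^{\mathbf{y}}\Phi(\cdot,\mathbf{y}) : \theta=1,\dots,\Theta,\ \ell=1,\dots,\Ntt,\ i=1,\dots,\rrltt\}\subseteq H .
\]
By (\ref{eqnllssltt}), every functional $\llssltt$ with $\sigma>\rrltt$ is a linear combination of the basis functionals at the same point. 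Hence for any $u\in H$ the full constraint vector is $Q\,\mathbf{r}(u)$, where $\mathbf{r}(u)$ collects the values $\lliltt(u)$ for $i\le\rrltt$. The constraints of (\ref{eqnMinProbv}) therefore read $Q\mathbf{r}(u)\le\mathbf{b}$.

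\textbf{Step 2: project onto $W$.} The Riesz representer of $\lliltt$ in $H$ is $(\lliltt)^{\mathbf{y}}\Phi(\cdot,\mathbf{y})$. This follows from the reproducing property together with differentiability of $\Phi$, which holds for a sufficiently smooth Wendland kernel. Decompose $u=w+w^{\perp}$ with $w\in W$ and $w^{\perp}\perp W$. Then $\lliltt(w^{\perp})=0$, so $\mathbf{r}(u)=\mathbf{r}(w)$ and $w$ is feasible whenever $u$ is. Moreover $\|w\|_H\le\|u\|_H$, with equality only if $w^{\perp}=0$. Consequently any minimiser of (\ref{eqnMinProbv}) lies in $W$, i.e. has the form (\ref{eqnLyapv}) for some coefficient vector $\beta$.

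\textbf{Step 3: compute norm and constraints for $v$ of the form (\ref{eqnLyapv}).} Using (\ref{eqnAelem}) and the reproducing property:
\begin{itemize}
\item $\|v\|_H^2=\beta^{T}A\beta$;
\item applying the basis functionals gives $\mathbf{r}(v)=A\beta$;
\item the constraints of (\ref{eqnMinProbv}) therefore become exactly $QA\beta\le\mathbf{b}$.
\end{itemize}
So (\ref{eqnMinProbv}) restricted to $W$ is precisely (\ref{eqnbTAb}). Since $A$ is positive definite, the map $\beta\mapsto v$ is injective, and the minimisers correspond one-to-one. The $\beta$ from Step 1 thus yields a minimiser $v$ of (\ref{eqnMinProbv}) of the stated form.

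\textbf{Step 4: uniqueness.} Uniqueness follows from Lemma \ref{lemUnibTAb} combined with Step 2. Independently, the feasible set of (\ref{eqnMinProbv}) is convex and the squared Hilbert norm is strictly convex, which also gives uniqueness, as in Lemma 5.1 of \cite{giesl2018construction}.

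\textbf{Main obstacle.} The delicate step is Step 2 together with the identification in Step 3. One must verify that the derivative functionals $\delta_{\xltt}\circ D_{\piltt}$ are bounded on $H$ with representers as claimed, which requires the kernel to be at least $C^{2}$. One must also confirm that the non-basis constraints are automatically encoded by $Q$ for every $u\in H$, not only for $u\in W$. The latter holds because (\ref{eqnllssltt}) is an identity between functionals on all of $H$: it comes from the linear dependence of the vectors $\mathbf{f}_{\piltt}(\xltt)$ at the single point $\xltt$, so it passes through $\nabla u(\xltt)$ for any $C^{1}$ function $u$.
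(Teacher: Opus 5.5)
Your proof is correct and follows essentially the same route as the paper. The paper takes an arbitrary feasible $s$ and replaces it by the norm-minimal interpolant $\tilde s$ of its basis-functional values via Wendland's Theorem 16.1, which is exactly your orthogonal projection onto $W$; it checks via (\ref{eqnllssltt}) that $\tilde Q\mathbf{r}=\tilde{\mathbf{r}}$, so $\tilde s$ stays feasible, and then compares with the QP minimiser using $\|v\|_H^2=\beta^TA\beta$. Your version is slightly more self-contained, since you also justify existence of the QP minimiser by coercivity and derive uniqueness directly rather than appealing to the remark after (\ref{eqnMinProbv}).
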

\begin{proof}
    Define $v$ by (\ref{eqnLyapv}) where the coefficients are determined by (\ref{eqnbTAb}) (see Lemma \ref{lemUnibTAb}). Let $s\in{H}$ be any function satisfying the constraints of the minimisation problem (\ref{eqnMinProbv}). We wish to show that $\|v\|_{H}\leq\|s\|_{H}$. We know that $s$ satisfies
    \begin{equation}
        \label{lemMainproofeqn1}
        \lliltt(s)=:r_{i}^{\ell,\theta}\leq\biltt
    \end{equation}
    for all $\theta=1,\dots,\Theta$, $\ell=1,\dots,\Ntt$ and $i=1,\dots,\cPtt$. For the rest of the lemma we set elements of $\mathbf{r}$ to be $r_{i}^{\ell,\theta}$ for $i=1,\dots,\rrltt$, and elements of $\Tilde{\mathbf{r}}$ to be $r_{s+\rrltt}^{\ell,\theta}$ for $s=1,\dots,\cPtt-\rrltt$.\par
    Consider the new minimisation problem
    \begin{equation}
        \label{lemMainproofeqn2}
        \begin{cases}
            \text{\rm{minimise }}\|v\|_{H}\\
            \text{\rm{subject to }}\lliltt(v)=r_{i}^{\ell,\theta}\text{, }\forall\theta=1,\dots,\Theta\text{, }\ell=1,\dots,\Ntt\\
            \phantom{ccccccccccccccccccccccccc}\text{ and }i=1,\dots,\rrltt
        \end{cases}\nonumber
    \end{equation}
    Now using Theorem 16.1 from \cite{wendland2004scattered} we can see that the solution of this problem is given by
    \begin{equation}
        \label{lemMainproofeqn3}
        \Tilde{s}(\mathbf{x})=\sum_{\theta=1}^{\Theta}\sum_{\ell=1}^{\Ntt}\sum_{i=1}^{\rrltt}\Tilde{\beta}_{i}^{\ell,\theta}(\lliltt)^{\mathbf{y}}\Phi(\cdot,\mathbf{y})\nonumber
    \end{equation}
    where $A\Tilde{\beta}=\mathbf{r}$, $A$ has entries (\ref{eqnAelem}), and $\Tilde{\beta}$ has the same structure as $\beta$ from (\ref{eqnbTAb}). We will show that these $\Tilde{\beta}$ satisfy $\Tilde{Q}A\Tilde{\beta}=\Tilde{\mathbf{r}}$, which is equivalent to showing that $\Tilde{Q}\mathbf{r}=\Tilde{\mathbf{r}}$. Indeed, using (\ref{eqnllssltt}) we can rewrite the equations in (\ref{lemMainproofeqn1}) for $\sigma=\rrltt+1,\dots,\cPtt$ as 
    \begin{equation}
        \label{lemMainproofeqn4}
        \sum_{i=1}^{\rrltt}\alpha_{\sigma,i}^{\ell,\theta}\lambda_{i}^{\ell,\theta}(s)=r_{\sigma}^{\ell,\theta}\Rightarrow\sum_{i=1}^{\rrltt}\alpha_{\sigma,i}^{\ell,\theta}r_{i}^{\ell,\theta}=r_{\sigma}^{\ell,\theta}\Rightarrow\Tilde{Q}\mathbf{r}=\Tilde{\mathbf{r}}\nonumber
    \end{equation}
    using the definition of the elements of $\Tilde{Q}$. Therefore $\Tilde{s}$ is also the minimiser for the problem
    \begin{equation}
        \label{lemMainproofeqn5}
        \begin{cases}
            \text{\rm{minimise }}\|v\|_{H}\\
            \text{\rm{subject to }}\lliltt(v)=r_{i}^{\ell,\theta}\text{, }\forall\theta=1,\dots,\Theta\text{, }\ell=1,\dots,\Ntt\\
            \phantom{ccccccccccccccccccccccccc}\text{ and }i=1,\dots,\cPtt
        \end{cases}\nonumber
    \end{equation}
    which shows that $\|\Tilde{s}\|_{H}\leq\|s\|_{H}$.\par
    Now both $v$ and $\Tilde{s}$ are of the form (\ref{eqnLyapv}) and the coefficients $\beta$ and $\Tilde{\beta}$ both satisfy the constraints of problem (\ref{eqnbTAb}). However, the coefficients $\beta$ of $v$ minimise $\beta^{T}A\beta=\|v\|_{H}^{2}$ so that $\|v\|_{H}^{2}\leq\|\Tilde{s}\|_{H}^{2}=\Tilde{\beta}^{T}A\Tilde{\beta}$. Therefore altogether we have that
    \begin{equation}
        \label{lemMainproofeqn6}
        \|v\|_{H}\leq\|\Tilde{s}\|_{H}\leq\|s\|_{H}.\nonumber
    \end{equation}
    Since the minimiser is unique (see statement after \eqref{eqnMinProbv}),
     $v$ is the minimiser.
\end{proof}

This means that the function $v$ from (\ref{eqnLyapv}) is an approximation of a function that solves the system of partial differential inequalities in (\ref{eqnDV}). If the matrix $Q$ is just the $\rho\times\rho$ identity matrix, then we are able to verify that the problem (\ref{eqnbTAb}) is feasible, this is shown in \cite{giesl2018construction}.

\section{Examples}
\label{secEx}

This section contains three examples that show how our algorithm can be applied to systems that use different switching rules. For arbitrary switched see Example \ref{exArb}, for variable structure see Example \ref{exVarStruct}, and for a mix of the two see Example \ref{exCombo}.\par
In each of these examples the Wendland functions that we use in the implementation of Algorithm \ref{algMain} is $\phi_{4,2}$ from \cite{giesl2007constructionRB} which means that we have
\begin{equation}
    \Psi_{0}(r)=(1-cr)^{6}_{+}[35(cr)^{2}+18cr+3]
\end{equation}
from which $\Psi_{1}$ and $\Psi_{2}$ can be calculated. We use $c=\frac{5}{6}$. The collocation points are selected to be $\frac{1}{6}\bZ^{2}\cap\Omega$, where $\Omega$ is $[-0.5,0.5]^{2}$. This means we have 49 collocation points in total. The computational complexity is of the order $N^2$ where $N$ is the number of collocation points.

\begin{figure}[ht]
    \centering
    \includegraphics[width=0.32\linewidth]{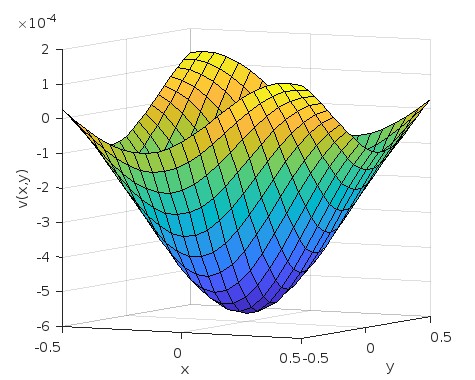}
    \includegraphics[width=0.32\linewidth]{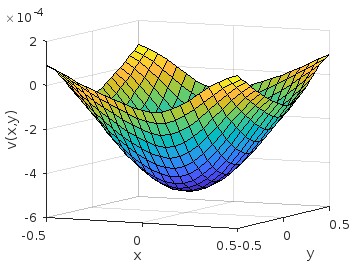}
    \includegraphics[width=0.32\linewidth]{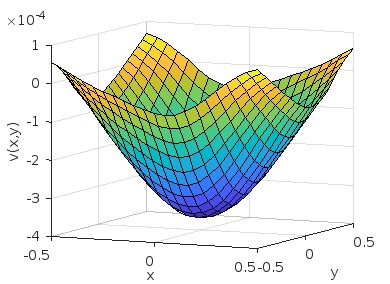}
    \caption{Lyapunov functions constructed for the following three examples. The left function is for Example \ref{exArb}, the middle is for Example \ref{exVarStruct} and the right is for Example \ref{exCombo}. The left and right functions took $3$ minutes to compute, and the middle $45$ seconds.}
    \label{fig:Lyaps}
\end{figure}

\begin{example}{\rm (Arbitrary Switched)}
\label{exArb}

Using $\mathbf{f}_{1}$ and $\mathbf{f}_{2}$ from Example 9.2 in \cite{hafstein2009algorithm}, we consider the autonomous systems
\begin{equation}
\label{eqnArbExf1}
\mathbf{\dot{x}}=\mathbf{f}_{1}(\mathbf{x})\text{, where }\mathbf{f}_{1}(x,y):=\left(\begin{array}{c}
-y \\
x-y(1-x^{2}+0.1x^{4})
\end{array}\right)\nonumber
\end{equation}
and
\begin{equation}
\label{eqnArbExf2}
\mathbf{\dot{x}}=\mathbf{f}_{2}(\mathbf{x})\text{, where }\mathbf{f}_{2}(x,y):=\left(\begin{array}{c}
-y+x(x^{2}+y^{2}-1)\\
x+y(x^{2}+y^{2}-1)
\end{array}\right)\text{.}\nonumber
\end{equation}
These are both asymptotically stable systems, and we switch between them arbitrarily over the whole state space $\Omega$. This can be expressed using the following system
\begin{equation}
    \label{eqnArbSwitched}
    \dot{\mathbf{x}}=\mathbf{f}_{p}(\mathbf{x})\text{, }p\in\{1,2\}.
\end{equation}
We set
 $C_{1}=C_{2}=\Omega$, and find that $\Theta=1$, $P^{1}=\{1,2\}$ with $\Omega^{P^{1}}=\Omega$. The Lyapunov function produced for this system can be seen on the left of Figure \ref{fig:Lyaps} and its orbital derivatives in Figure \ref{fig:ArbOrbDervs}, which are negative apart from the origin.

\begin{figure}[ht]
    \centering
    \includegraphics[width=0.44\linewidth]{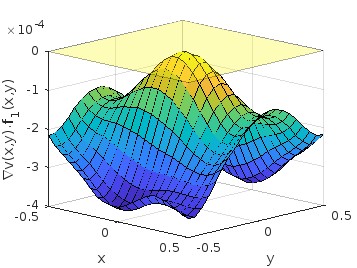}
    \includegraphics[width=0.44\linewidth]{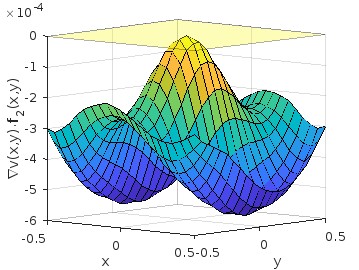}
    \caption{Plots of the orbital derivatives of the Lyapunov function displayed on the left of Figure \ref{fig:Lyaps}: $\nabla{v}(x,y)\cdot\mathbf{f}_{1}(x,y)$ (left) and $\nabla{v}(x,y)\cdot\mathbf{f}_{2}(x,y)$ (right). In both figures the plane $\{z=0\}$ has been plotted to illustrate that the constraints in (\ref{eqnMinProbv}) have been met.}
    \label{fig:ArbOrbDervs}
\end{figure}
\end{example}

\begin{example}{\rm (Variable Structure)}
\label{exVarStruct}
Here we consider the example found in \cite{liberzon1999basic} (also studied in \cite{hafstein2009algorithm}), with the two unstable linear systems $\mathbf{\dot{x}}=A_{1}\mathbf{x}$ and $\mathbf{\dot{x}}=A_{2}\mathbf{x}$, where
\begin{equation}
   \label{eqnVarStrucA1}
   A_{1}:=\left(\begin{array}{cc}
   0.1 & -1 \\
   2 & 0.1
   \end{array}\right)\text{ and }A_{2}:=\left(\begin{array}{cc}
   0.1 & -2 \\
   1 & 0.1
   \end{array}\right).
\end{equation}
We can switch between these systems in such a way that we can produce a variable structure system where trajectories are asymptotically attracted to the equilibrium. This occurs when we define the following quadrants
\begin{eqnarray}
Q_{1} & := & \{(x,y):x>0\text{ and }y\geq{0}\}\cap\{(0,0)\}\nonumber\\
Q_{2} & := & \{(x,y):x\leq{0}\text{ and }y>0\}\cap\{(0,0)\}\nonumber\\
Q_{3} & := & \{(x,y):x<0\text{ and }y\leq{0}\}\cap\{(0,0)\}\nonumber\\
Q_{4} & := & \{(x,y):x\geq{0}\text{ and }y<0\}\cap\{(0,0)\}\nonumber
\end{eqnarray}
and switch when the solution trajectory enters a new quadrant.
We define
\begin{equation}
    \label{eqnVarStrucAp}
    \mathbf{\dot{x}}=\mathbf{f}_{p}(\mathbf{x})=A_{p}\mathbf{x}\text{, }p\in\{1,2\}
\end{equation}
where $p=1$ for $\mathbf{x}\in{Q_{2}\cup{Q_{4}}}$ and $p=2$ for $\mathbf{x}\in{Q_{1}\cup{Q_{3}}}$.\par
We can apply our algorithm to this example with $C_{1}=Q_{2}\cup{Q_{4}}$ and $C_{2}=Q_{1}\cup{Q_{3}}$, i.e.
$\Theta=2$, $P^{1}=\{1\}$ and $P^{2}=\{2\}$, with $\Omega^{P^{1}}=Q_{2}\cup{Q_{4}}$ and $\Omega^{P^{2}}=Q_{1}\cup{Q_{3}}$. Note that there is no intersection between our sets $C_{1}$ and $C_{2}$. The algorithm produces the plot for the Lyapunov function for this system as seen in the middle of Figure \ref{fig:Lyaps}, as well as plots for its orbital derivatives in Figure \ref{fig:VarStrucOrbDervs}, which are negative apart from the origin.
\begin{figure}[ht]
    \centering
    \includegraphics[width=0.45\linewidth]{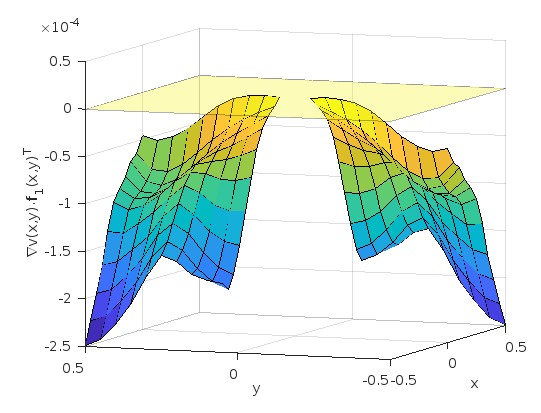}
    \includegraphics[width=0.45\linewidth]{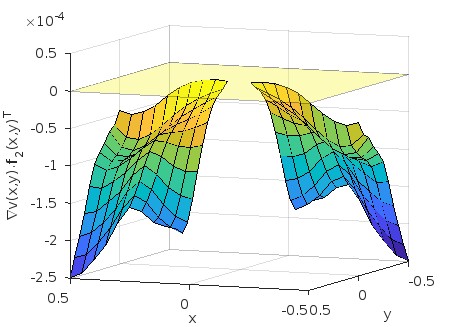}
    \caption{Plots of the orbital derivatives of the Lyapunov function displayed in the middle of Figure \ref{fig:Lyaps}: $\nabla{v}(x,y)\cdot\mathbf{f}_{1}(x,y)$ (left) and $\nabla{v}(x,y)\cdot\mathbf{f}_{2}(x,y)$ (right). The orbital derivatives are plotted in the quadrants where their respective function is defined. In both figures the plane $\{z=0\}$ has been plotted to illustrate that the constraints in (\ref{eqnMinProbv}) have been met.}
    \label{fig:VarStrucOrbDervs}
\end{figure}
\end{example}

\begin{example}{\rm (Arbitrary and Variable Structure system)}
    \label{exCombo}
    This example implements both time and state dependent switching. To do this we use the first system $\mathbf{f}_{1}$ from Example \ref{exArb}, and
    the two systems (\ref{eqnVarStrucAp}) from Example \ref{exVarStruct}. For ease of notation we label $\mathbf{f}_{2}(\mathbf{x}):=A_{1}\mathbf{x}$ and $\mathbf{f}_{3}(\mathbf{x}):=A_{2}\mathbf{x}$.\par
    Here we will be switching between $\mathbf{f}_{1}$ and $\mathbf{f}_{2}$ in the quadrants where $\mathbf{f}_{2}$ is defined, and between $\mathbf{f}_{1}$ and $\mathbf{f}_{3}$ in the quadrants where $\mathbf{f}_{3}$ is defined. These quadrants are specified in Example \ref{exVarStruct}. This can be described by the following system
    \begin{equation}
        \label{eqnComboSystem}
        \dot{\mathbf{x}}=\mathbf{f}_{p}(\mathbf{x})\text{, }p\in\{1,2,3\}
    \end{equation}
    where $p$ switches arbitrarily between $1$ and $2$ when $\mathbf{x}\in{Q_{2}\cup{Q_{4}}}$, and similarly between $1$ and $3$ when $\mathbf{x}\in{Q_{1}\cup{Q_{3}}}$.\par
    Our algorithm is applicable here with $C_{1}=\Omega$, $C_{2}=Q_{2}\cup{Q_{4}}$ and $C_{3}=Q_{1}\cup{Q_{3}}$. This then partitions our space with $\Theta=2$, $P^{1}=\{1,2\}$ and $P^{2}=\{1,3\}$, which means that $\Omega^{P^{1}}=Q_{2}\cup{Q_{4}}$ and $\Omega^{P^{2}}=Q_{1}\cup{Q_{3}}$. The algorithm produces the plot for the Lyapunov function for this system as seen on the right of Figure \ref{fig:Lyaps}, as well as plots for its orbital derivatives in Figure \ref{fig:ComboOrbDervs}. These final two figures show that
     the constraints in (\ref{eqnMinProbv}) have been met.
\begin{figure}[ht]
    \centering
    \includegraphics[width=0.45\linewidth]{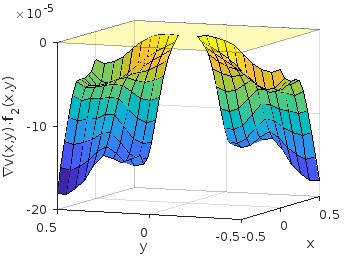}
    \includegraphics[width=0.45\linewidth]{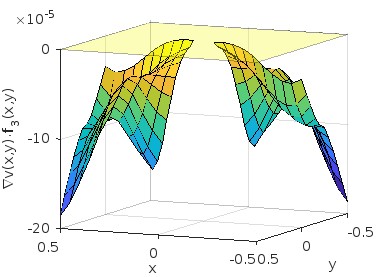}
    \includegraphics[width=0.45\linewidth]{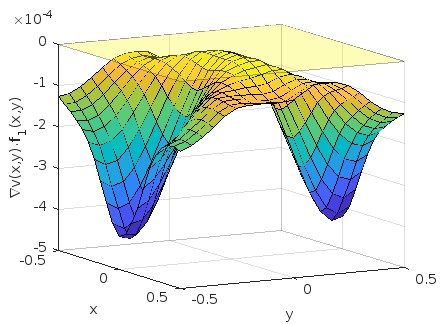}
    \caption{Plots of the orbital derivatives of the Lyapunov function displayed on the right of Figure \ref{fig:Lyaps}: $\nabla{v}(x,y)\cdot\mathbf{f}_{2}(x,y)$ (top left), $\nabla{v}(x,y)\cdot\mathbf{f}_{3}(x,y)$ (top right) and $\nabla{v}(x,y)\cdot\mathbf{f}_{1}(x,y)$ (bottom). For the systems $\mathbf{f}_{2}$ and $\mathbf{f}_{3}$ the orbital derivatives are plotted in the quadrants where their respective function is defined. The planes $\{z=0\}$ have been plotted to illustrate that the constraints in (\ref{eqnMinProbv}) have been met.}
    \label{fig:ComboOrbDervs}
\end{figure}
\end{example}

\section{Conclusion}

This paper presents a new method for the construction of a Lyapunov function for switched systems. The method is applicable to both arbitrary switched and variable structure systems, as well as systems that employ both time- and state-dependent switching. We have shown that the existence of a Lyapunov function implies that the origin is an asymptotically stable equilibrium point of the switched system. An algorithm to construct a Lyapunov function is presented which finds an approximate solution to a system of partial differential inequalities using meshfree collocation and quadratic programming.\par
Future work (\cite{JWConvergence}) will prove the existence of 
a Lyapunov function, and therefore the feasibility of problem (\ref{eqnbTAb}).
Moreover, we will study the convergence of the algorithm presented in this paper, i.e. we will show that the sequence of functions $v_n$ strongly converges to a solution $V$ of the original system of partial differential inequalities (\ref{eqnDV}). Here, $v_n$ is the solution of the discretised problem with $N_n$ collocation points, such that the appropriate fill distances converge to zero.
\medskip

\bibliographystyle{abbrv}
\bibliography{references}

\end{document}